\newcommand{\bR}{\mathbb R}
\theoremstyle{plain}
\newtheorem{theorem}{Theorem}[section]
\newtheorem{conjecture}[theorem]{Conjecture}
\theoremstyle{definition}
\newtheorem{definition}[theorem]{Definition}
\newtheorem{remark}[theorem]{Remark}
\author{Patrick Dukes}
\address{ Department of Mathematics ,Winthrop University,Rock Hill SC
2973, USA} 
\email{pdukes3@gmail.com}
\author{Joe Rusinko}
\address{Department of Mathematics, Winthrop University, Rock Hill SC
29733, USA}
\email{rusinkoj@winthrop.edu}
\title[Double Wiring]{Commutation Classes of Double Wiring Diagrams}
\date\today
\begin{document}
\begin{abstract} We describe a new method for computing the graph of commutation classes of double wiring diagrams. Using these methods we compute the graph for five strings or less which allows us to confirm a positivity conjecture of Fomin and Zelevinsky when $n\le4$ . 
\end{abstract}
\maketitle
\section{Introduction}
\label{sec:intro}

Double wiring diagrams were introduced to study totally positive matrices and became a motivating example in the study of cluster algebras (\protect\cite{cluster1},\protect\cite{laurent},\protect\cite{TP}). In particular, the graph displaying the relationships among the commutation classes is a precursor to the exchange graph, and the relationships among the chamber minors are precursors to exchange relations.

In \protect\cite{TP} Fomin and Zelevinsky define an \emph{n-stringed double wiring diagram} as
two sets of $n$ piecewise linear lines (red and blue) such that each line intersects every other line of the same color exactly once. We number red lines from $1$ to $n$ with $1$ on the top left and $n$ on the bottom left. The blue lines are labeled in the reverse order. In addition, every chamber of the wiring diagram is labeled with a pair of subsets $(r,b)$ where $r$ (resp. $b$) is the subset of $\{1,2,\cdots ,n\}$ identifying the red (resp. blue) strings which pass below the chamber. See Figure \protect\ref{4wire} for an example.
\begin{figure}[H]
\includegraphics[width=1\linewidth]{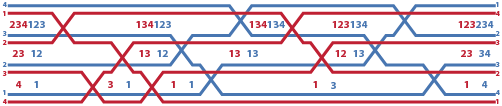}
\caption{A four string double wiring diagram with chamber labels.}
\label{4wire}
\end{figure}
It is possible that slightly different wiring diagrams yield the same collection of chamber labels.  
Fomin and Zelevinsky consider two wiring diagrams which share the same collection of chamber labels \emph{isotopic}.  For single wiring diagrams such collections of diagrams are called commutation classes which have been studied in   \protect\cite{single2} \protect\cite{single1}.  To keep this connection clear, we use the term  \emph{commutation classes of double wiring diagrams}.

Any two commutation classes of double wiring diagrams can be linked by a sequence of the braid moves pictured in Figure \protect\ref{moves} \protect\cite{TP}.  Note that in each exchange only one chamber label changes. We will call this label the \emph{center of the braid move}. 
\begin{figure}[H]
   \centering
  \subfloat[2-Move]{\label{fig:2move}\includegraphics[width=0.4\textwidth]{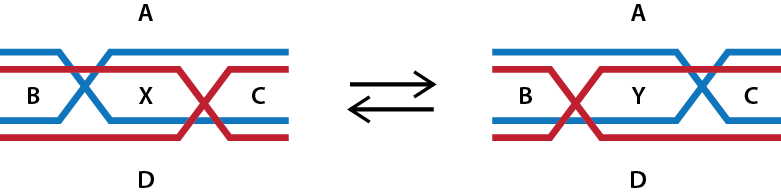}}   \hspace{.2in}
  \subfloat[3-Move]{\label{fig:3move}\includegraphics[width=0.4\textwidth]{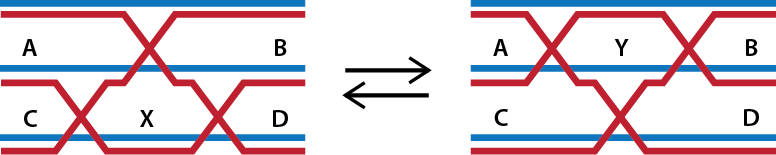}}
  \caption{Braid Moves}
  \label{moves}
\end{figure}
\begin{definition} The graph of commutation classes of wiring diagrams, $\Phi_n$, has a unique vertex for every commutation class of double wiring diagram with $n$ strings.
Two vertices are connected by an edge if their wiring diagrams differ by a single braid move.
\end{definition}
Fomin and Zelevinsky prove that $\Phi_n$ is a finite connected graph and compute $\Phi_3$ \protect\cite{TP}.  In this paper we present a method for computing $\Phi_n$ and use it to construct $\Phi_4$ and $\Phi_5$.   We use these calculations to verify a positivity conjecture of Fomin and Zelevinsky when $n \le 4$.

\section{Computing $\Phi_n$}
\label{sec:compute}
\begin{definition} The quiver $Q(w)$ has vertices corresponding to chamber labels  and an arrow from $(r,b)$ to $(r',b')$ if $r'=r\cup \{r_j\} $ and $b'=b\cup\{b_k\}$ for $r_j,b_k \in \{1,2,\cdots,n\}$.  
\end{definition}
Figure \protect\ref{quiver} shows $Q(w)$ for the wiring diagram pictured in Figure \protect\ref{4wire}.
As appropriate we may label the arrows of the quiver with the pair of numbers being adjoined to $r$ and $b$, or simply by the red or blue numbers individually.
\begin{figure}[H]
\includegraphics[width=\textwidth]{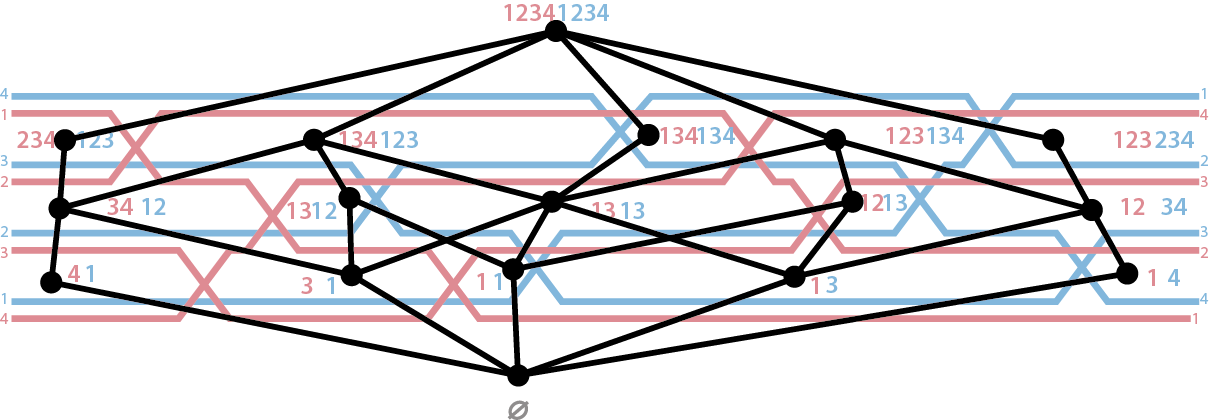}
\caption{Quiver diagram}
\label{quiver}
\end{figure}

\begin{definition} A subquiver is \emph{complete} if it contains every arrow of $Q$ which connects two vertices in the subquiver.
\end{definition}
\begin{definition} A subquiver is \emph{full} if it contains every vertex of $Q$ which lies within the boundary of the subquiver.
\end{definition}

\begin{theorem}
\label{thm:3} There exists a 3-move centered at label $(r,b)$ if and only if $Q(w)$ contains one of the two complete, full subquiver shown in Figure \protect\ref{3quiver}.
\begin{figure}
  \centering
  \subfloat[]{\label{3move1}\includegraphics[width=0.3\textwidth]{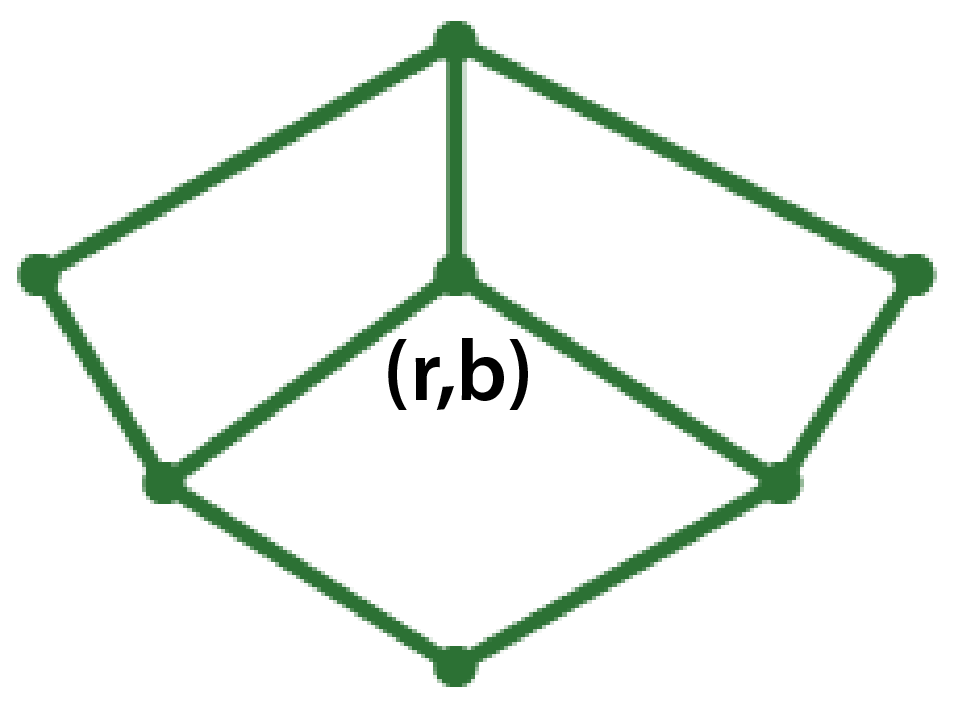}}   
  \subfloat[]{\label{3move2}\includegraphics[width=0.3\textwidth]{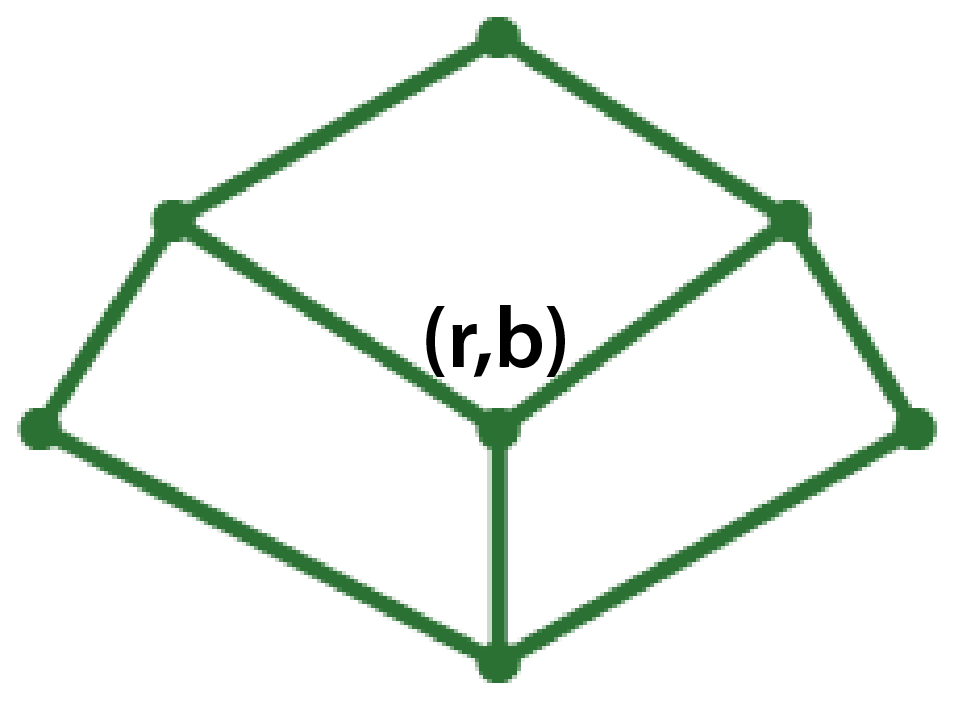}}
  \caption{Subquivers for 3-move}
\label{3quiver}
\end{figure}
\end{theorem}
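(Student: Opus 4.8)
The plan is to read off the local picture of a $3$-move chamber by chamber and match it against $Q(w)$, exploiting that the quiver records, for each chamber label, exactly which nearby labels are obtained by adjoining one red and one blue element.

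For the direction ``$3$-move $\Rightarrow$ subquiver'', suppose a $3$-move can be performed with center $(r,b)$. A braid move involves three pseudolines of a single colour, so I may assume three red lines braid, the blue case being identical by symmetry; the two subquivers of Figure \ref{3quiver} will correspond to the two shapes the braid configuration can take. I would first isolate the hypothesis that makes the move legal, namely that the triangle bounded by the three braiding lines, together with the collar bounded by the first crossing encountered along each of its three sides, meets no other line; this is precisely what guarantees that only one chamber label changes. I then list the chambers incident to this region---the triangle, whose label is $(r,b)$, and the chambers surrounding it---and compute each label from the crossing data, using that crossing a red (resp.\ blue) arc toggles one element of the red (resp.\ blue) component. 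With the labels in hand I verify three things: that the arrows of $Q(w)$ among exactly these vertices reproduce subquiver \ref{3move1} or \ref{3move2}; that \emph{no additional} arrow joins two of them, which is the completeness and is immediate from the label list since an arrow demands a simultaneous increment in both components; and that no further vertex of $Q(w)$ sits inside the subquiver, which is the fullness and is exactly the legality hypothesis above, a vertex inside the region being a chamber that a line crossing the triangle would have to create.

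For the converse I run this backwards. Assume $Q(w)$ contains subquiver \ref{3move1} or \ref{3move2} as a complete, full subquiver with distinguished vertex $(r,b)$. From the vertex labels and the red/blue decorations of the arrows I reconstruct the cyclic sequence of arcs bounding a small disc around the chamber $C$ labelled $(r,b)$: the decorations say which of the bounding arcs are red and which blue and single out the three like-coloured lines involved, and completeness certifies that no incidence among these chambers has been overlooked. This forces $C$ to be a triangle cut out by three lines of one colour in braid position; fullness then forces this triangle to be empty, since any other line meeting it would split one of the listed chambers or create a new chamber inside the enclosed region, hence a vertex of $Q(w)$ strictly inside the subquiver. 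An empty braid triangle is precisely a legal $3$-move centered at $(r,b)$.

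The main obstacle I anticipate is twofold. The first is bookkeeping: for three pseudolines in general position several of the surrounding regions can be large, and one must be careful not to conflate two of them, so that the computed labels genuinely match the pictured subquivers without an orientation or inclusion error; checking both shapes and both colours is where the casework lives. The second, and the real content of the theorem, is to make airtight the equivalence between the geometric statement ``the braid triangle is empty'' and the combinatorial statement ``the subquiver is full''. This needs a precise description of how the planar drawing of $Q(w)$ inherits the geometry of the wiring diagram, together with the observation that a line passing through the triangle must produce a chamber whose position lands strictly inside the region enclosed by the subquiver; establishing this in both directions, rather than merely making it plausible, is where I expect the delicate work to be.
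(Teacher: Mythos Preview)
Your forward direction matches the paper's: read the subquiver off from the local picture of a 3-move and check completeness and fullness directly from the chamber labels.

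For the converse the two arguments diverge in method. You propose to reconstruct the boundary of the chamber $C$ labelled $(r,b)$ from the arrows incident to it, and then argue geometrically that $C$ is a monochrome triangle. The paper does not attempt this; instead it runs an exhaustive case analysis on the arrow labels. Since the bottom and top vertices of the subquiver are joined by paths of length three, at most three red (and three blue) indices can appear on the arrows; enumerating the ways to distribute them over the four-cycles gives eight red patterns and eight blue patterns. Most are killed because they would force two same-colour strings to cross twice, and the remaining mixed pairings are killed because they would produce an extra arrow in $Q(w)$, contradicting completeness. Only after the labelling is pinned down (up to symmetry) does the paper rebuild the crossings and invoke fullness to rule out stray lines.

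There is a genuine gap in your version of the converse. Arrows of $Q(w)$ go from $(r,b)$ to $(r\cup\{r_j\},\,b\cup\{b_k\})$: they join chambers whose labels differ in \emph{both} colours simultaneously. Two chambers that share an arc of the wiring diagram, however, have labels differing in exactly one colour. Hence the arrows at $(r,b)$ do not point to the chambers bounding $C$, and the ``cyclic sequence of arcs bounding a small disc around $C$'' cannot be read off from them in the way you describe. What actually forces the monochrome-triangle configuration is the elimination argument sketched above: it is precisely the interaction between the path-length constraints on the labels and the completeness hypothesis that rules out every labelling except the one realising the 3-move. You correctly anticipate that casework lives here, but the specific mechanism---labels constrained by path length, then winnowed first by the no-double-crossing rule and then by completeness---is the content your sketch is missing, and your proposed substitute (boundary reconstruction from $Q(w)$-arrows) does not work as stated.
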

 \begin{proof}Assume a 3-move exists. Then there must be a region of the wiring diagram isomorphic to Figure \protect\ref{fig:3move}. Constructing the subquiver from this diagram yields Figure \protect\ref{3quiver}.

Assume $Q(w)$ has a compete full subquiver isomorphic to  Figure \protect\ref{3move1} .  We examine the possible red labels for this subquiver. Since the bottom vertex is connected to the top by a path of length three, we know that only three distinct edge labels may appear in this subquiver. We label the left most path from the bottom to the top which passes through $(r,b)$ $x,y,z$ as pictured in Figure \protect\ref{3step1}. 
\begin{figure}[H]
\includegraphics{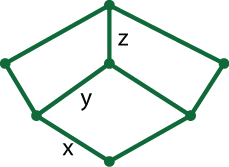}
\caption{}
\label{3step1}
\end{figure}

For each four-cycle in Figure \ref{3step1} only two distinct edge labels may be used since the bottom and top vertices are connected by a path of length two. This limits the potential labelings to those in Figure \protect\ref{3step2}. The case corresponding to picture $d)$ in the figure can not exist because strings  $z$ and $y$ are exchanged  twice which contradicts the definition of  a double wiring diagram.
\begin{figure}[H]
\includegraphics[width=.9\textwidth]{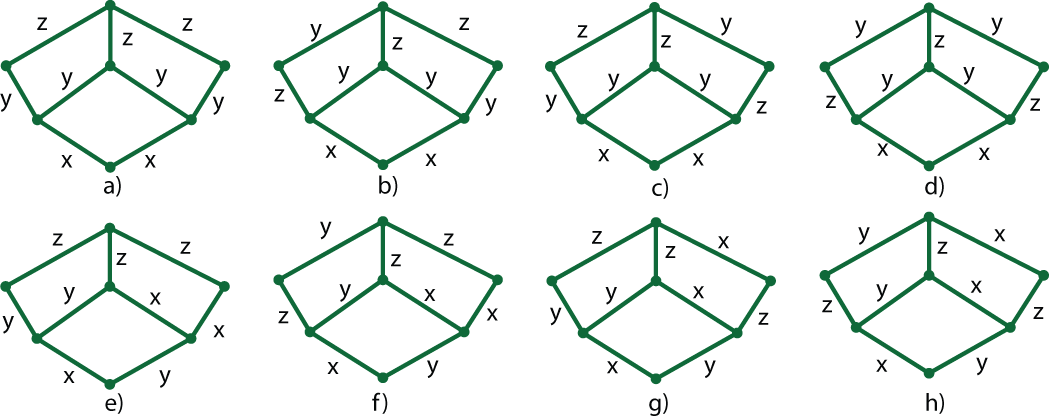}
\caption{Labeled subquivers}
\label{3step2}
\end{figure}

Repeat this argument for the blue strings and label those cases A through H. We now  determine which red and blue cases can be paired together. Since the labels must be distinct the only potential pairs are $(a,H),(b,G)$, and $(c,F)$, and their opposites $(h,A)$,$(g,B)$ and $(f,C)$.  

If we draw a subquiver with the labels in the case $(b,G)$, as in Figure \ref{3step4}, we recover an extra arrow which contradicts the hypothesis that the subquiver was complete.  The pairs  $(c,F),(g,B)$ and $(f,C)$ are symmetric to $(b,G)$ so they are also eliminated. This leaves only $(a,H)$ and $(h,A)$ as possible labelings.
\begin{figure}[H]
\includegraphics[scale=.6]{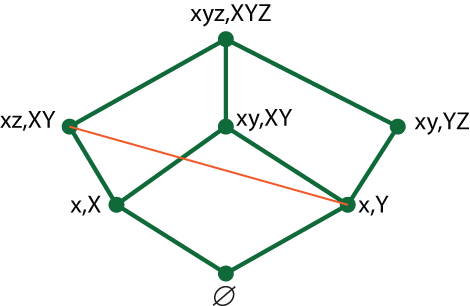}
\caption{}
\label{3step4}
\end{figure}
By symmetry of the labelings we may assume the edge labels are of type $(h,A)$. Since this subquiver is full, there are no missing vertices.  This means that changes in chamber labels of the same cardinality indicate a unique braid crossing as pictured in Figure \ref{3cross}.  
\begin{figure}[H]
  \centering
  \subfloat[]{\label{3cross}\includegraphics[width=0.4\textwidth]{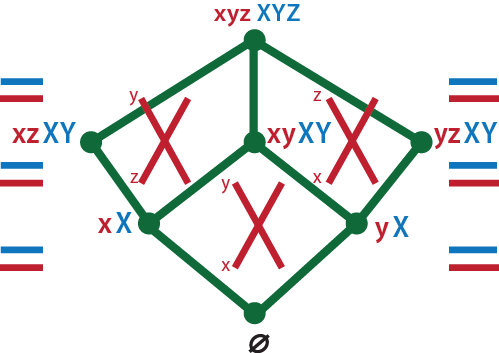}}  \hspace{.3in}
  \subfloat[]{\label{3rebuilt}\includegraphics[width=0.4\textwidth]{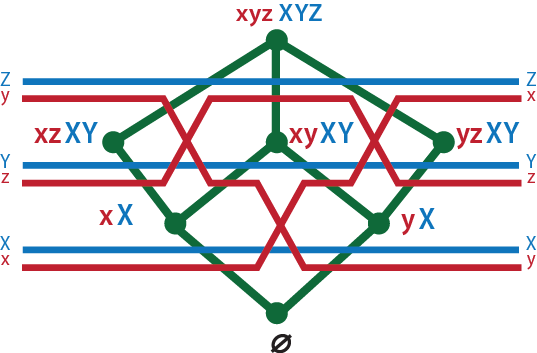}}
  \caption{Reconstructed 3-move}
\label{3recon}
\end{figure}
No other crossings may occur in this region because the quiver is complete.  Therefore the strings  must connect without creating any other crossings. This yields the 3-move pictured in Figure \ref{3rebuilt}.

The proof for  Figure \protect\ref{3move2} follows the same argument with reflected labels.
\end{proof}
\newpage
\begin{theorem}
\label{thm:4} There exists a two move centered at label $(r,b)$ if and only if $Q(w)$ contains the full subquiver shown in Figure \protect\ref{quiver2}.
\begin{figure}[H]
\includegraphics[scale=.8]{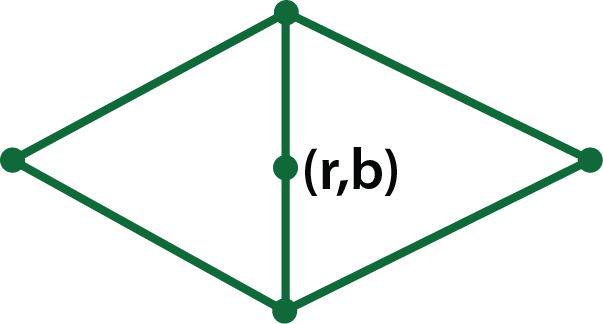}
\caption{Subquiver for 2-move}
\label{quiver2}
\end{figure}
\end{theorem}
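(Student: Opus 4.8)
The plan is to follow the template established in the proof of Theorem~\ref{thm:3}, replacing the 3-move configuration of Figure~\ref{fig:3move} by the 2-move configuration of Figure~\ref{fig:2move} throughout. For the forward implication, suppose a 2-move is centered at $(r,b)$. Then a neighborhood of the underlying wiring diagram is isomorphic to Figure~\ref{fig:2move}. The two crossings appearing there are necessarily of opposite colors, since a pair of equally-colored strings may be exchanged only once, and they are positioned so that the chamber labeled $(r,b)$ is the unique chamber bounded by both. Reading off the chamber labels around this local picture, together with the inclusions among them, produces exactly the subquiver drawn in Figure~\ref{quiver2}; it is full because the displayed region contains no further chambers.

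For the converse, suppose $Q(w)$ contains a full subquiver isomorphic to Figure~\ref{quiver2}, and argue as in the proof of Theorem~\ref{thm:3}. First I would pin down the admissible edge labels: every arrow adjoins one red index and one blue index, and, since the two extreme vertices of any four-cycle are joined by a path of length two, such a four-cycle carries at most two distinct red edge-labels and at most two distinct blue edge-labels. Enumerating the resulting possibilities and discarding every labeling in which the forced crossings would exchange some monochromatic pair of strings twice --- the step that removed the case $d)$ of Figure~\ref{3step2} --- leaves a short list, which the reflection symmetry of the labeling collapses to a single case. With the labels fixed, I would reconstruct the diagram: fullness guarantees that no chamber has been hidden, so each change of label between two vertices of equal cardinality is witnessed by a single crossing of the indicated color, exactly as in Figure~\ref{3cross}. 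Inserting the two forced crossings --- one red and one blue, meeting along the center chamber --- and joining the remaining strand segments without introducing new crossings recovers Figure~\ref{fig:2move}, that is, a 2-move centered at $(r,b)$.

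The point requiring the most care is that Theorem~\ref{thm:4} asks only for a \emph{full} subquiver, not a complete one. This weaker hypothesis suffices because a 2-move is witnessed by just two adjacent crossings: additional crossings elsewhere in the diagram neither obstruct the move nor need to be excluded, while fullness already rules out a crossing wedged between the two --- whereas a 3-move needs an uncluttered three-crossing region, which is why the stronger ``complete'' hypothesis is forced there. Concretely, in the reconstruction step I will have to show that any extra arrow among the vertices of Figure~\ref{quiver2} is either already present in every admissible labeling or else would produce a doubly-exchanged monochromatic pair, so that allowing such arrows does not genuinely strengthen the hypothesis. Keeping the case analysis simultaneously exhaustive and non-redundant, and correctly using ``full'' in place of ``complete'' when rebuilding the strands, is the main obstacle I anticipate.
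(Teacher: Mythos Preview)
Your proposal is correct and follows essentially the same approach as the paper: the forward direction reads the subquiver directly off Figure~\ref{fig:2move}, and the converse enumerates the possible red and blue edge-labelings on the subquiver, discards the one that forces a same-color pair to cross twice, pairs the surviving red and blue cases (only $(b,C)$ and $(c,B)$ remain), reduces by symmetry to a single case, and then reconstructs the local 2-move using fullness exactly as you describe. Your discussion of why ``full'' suffices in place of ``complete'' is more scrupulous than the paper itself, which simply asserts at the end that no other crossings occur in the region and connects the strands.
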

\begin{proof}
Assume a two move exists. Then there must be a region of the wiring diagram isomorphic to Figure \ref{fig:2move}. Constructing the quiver from this diagram yields the subquiver in Figure \ref{quiver2}.

Now assume $Q(w)$ contains the full subquiver in Figure \ref{quiver2}. We examine the possible red labels for the subquiver. Label the arrows to and from $(r,b)$ as $x$ and $y$. Since there is a path from the bottom vertex to the top vertex of length two, all arrows in the subquiver  must be labeled $x$ or $y$. Figure \ref{2step1} shows the possible labelings.  The case corresponding to picture $d)$ can be eliminated because it would require  strings $x$ and $y$ to be exchanged  twice.
\begin{figure}[H]
\includegraphics[width=1\textwidth]{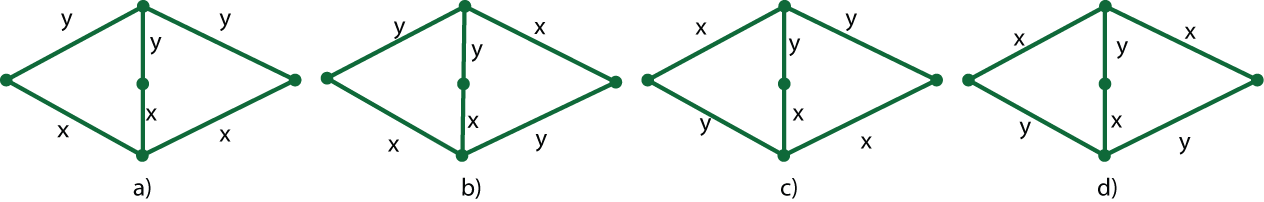}
\caption{}
\label{2step1}
\end{figure}

We construct a similar pattern of possibilities for the blue strings by labeling the arrows with $X$ and $Y$.
We need to determine which red and blue cases can be paired together. Since all of the labellings are distinct, the only potential pairs of cases are $(b,C)$ and $(c,B)$ (See Figure \protect\ref{2step2}).
\begin{figure}[H]
\includegraphics[scale=.8]{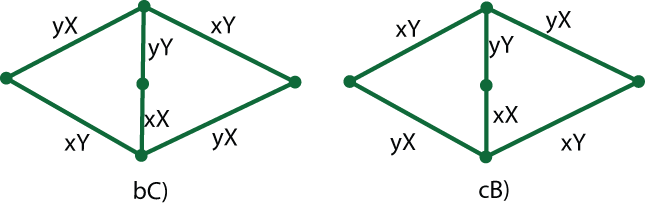}
\caption{}
\label{2step2}
\end{figure}

As the labelings are symmetric, we can assume without loss of generality that the diagram has edge labels of type $(b,C)$.  Since this subquiver is full there are no missing vertices.  This means that changes in chamber labels of the same cardinality indicate a unique braid crossing as pictured in Figure \ref{2cross}.  
\begin{figure}[H]
  \centering
  \subfloat[]{\label{2cross}\includegraphics[width=0.4\textwidth]{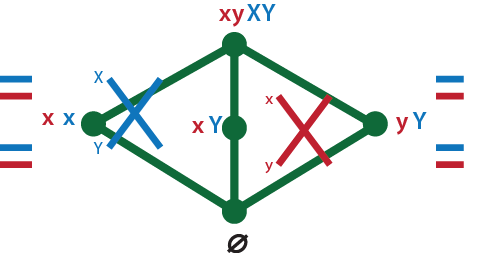}}  \hspace{.3in}
  \subfloat[]{\label{2rebuilt}\includegraphics[width=0.4\textwidth]{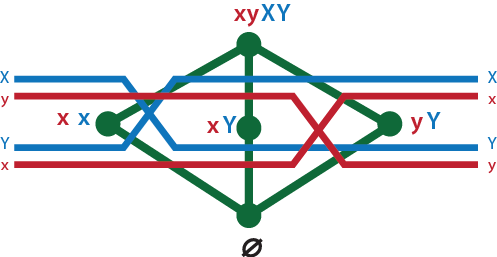}}
\caption{Reconstructed 2-move}
\label{2recon}
\end{figure}Since no other crossings may occur in this region we  connect the strings without creating any other crossings. Doing so yields the 2-move pictured in Figure \ref{2rebuilt}.  
\end{proof}

\section{Describing $\Phi_n$}
Using theorems \ref{thm:3} and \ref{thm:4} we graph $\Phi_n$ for $n\le 5$.  The smallest graph $\Phi_2$ consists of two vertices connected by an edge.
The graph of $\Phi_3$ first appeared in \protect\cite{TP}.  Figure \ref{phi3} shows a new representation of $\Phi_3$ which indicates the presence of a Hamiltonian path.
$\Phi_4 $ is pictured in Figure \ref{phi4} and Tables \ref{data}, \ref{degree} and \ref{degree5} summarize information about $\Phi_n$ .  
\begin{figure}[H]
\includegraphics[width=.6\textwidth]{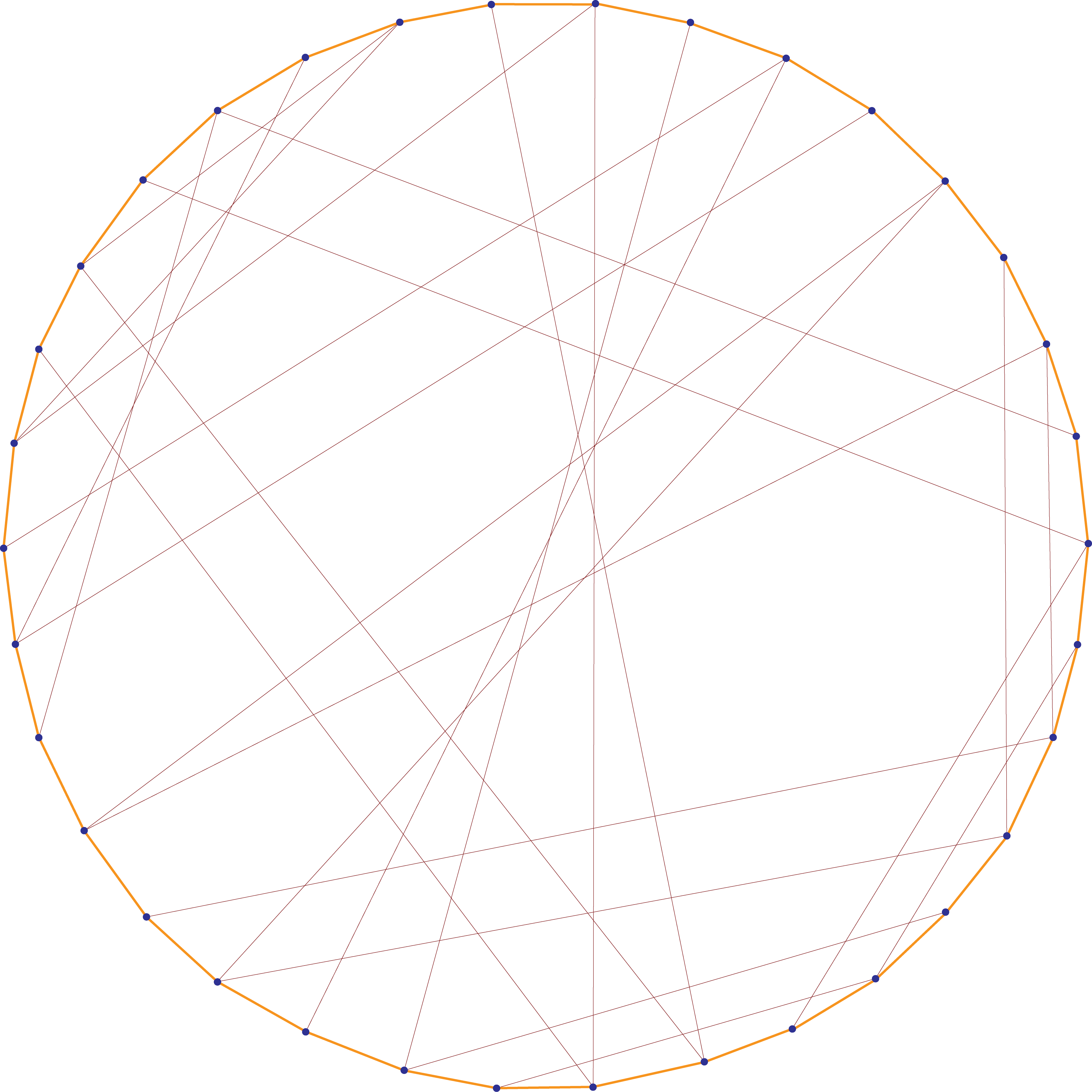}
\caption{$\Phi_3$ with Hamiltonian cycle highlighted}
\label{phi3}
\end{figure}
\begin{figure}[H]
\includegraphics[width=.85\textwidth]{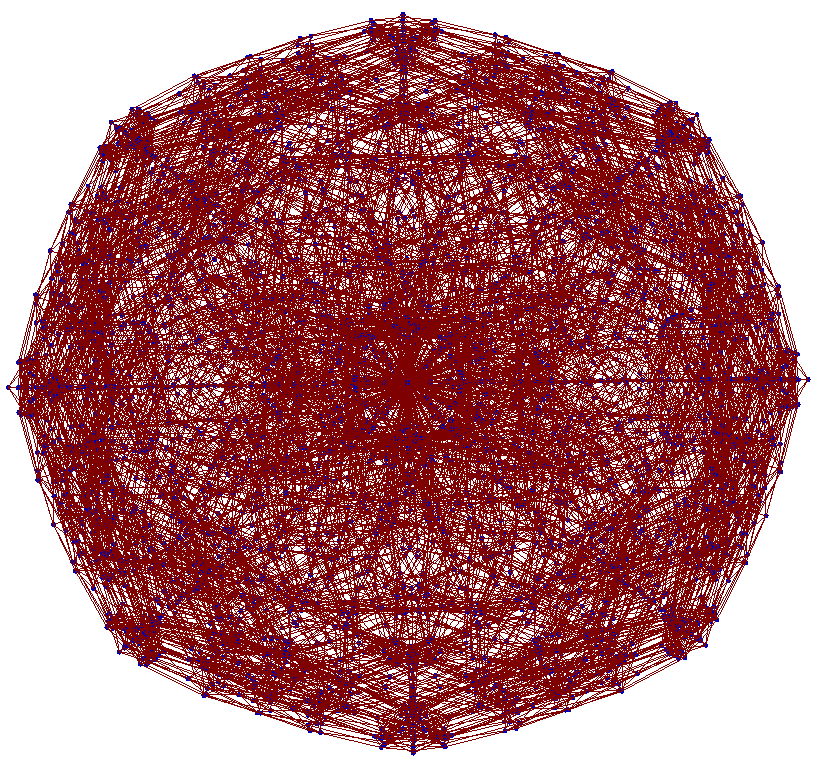}
\caption{Depiction of $\Phi_4$}
\label{phi4}
\end{figure}
\begin{table}[H]
\begin{tabular}{|l||c|c|c|c|}
\hline
 & $\Phi_2$ & $\Phi_3$ & $\Phi_4$ & $\Phi_5$ \\
\hline \hline
Vertices & 2 & 34 & 4894 & 5520372 \\
\hline
Edges & 1 & 120 & 33300& 60930112 \\
\hline
\end{tabular}
\caption{$\Phi_n$ edge and vertex data}
\label{data}
\end{table}
\begin{table}[H]
\begin{tabular}{|l||c|c|c|c|c|c|c|c|c|}
\hline
  & 1 & 2 & 3 & 4 & 5 & 6 & 7 & 8 & 9 \\
\hline \hline
$\Phi_2$ & 2 &  &  & &  &  & &  &   \\
\hline
$\Phi_3$ &  &  & 16  & 18 &  &  &  &  &  \\
\hline
$\Phi_4$ &  &  &   &  2 & 522 & 1362 & 1754 &  1054& 200\\
\hline
\end{tabular}
\caption{Number of vertices of given degree for $\Phi_n$}
\label{degree}
\end{table}
\begin{table}[H]
\begin{tabular}{|l||c|c|c|c|c|c|}
\hline
 & 6 & 7 & 8 & 9 & 10 & 11 \\
\hline
$\Phi_5$ & 84 & 28584 & 198596 & 632028 & 1165732 & 1402756   \\
\hline \hline
 & 12 & 13 & 14 & 15 & 16 & \\
\hline
$\Phi_5$  & 1165888  & 651188 & 227520 & 44452 & 3544 & \\
\hline
\end{tabular}
\caption{Number of vertices of given degree for $\Phi_5$}
\label{degree5}
\end{table}

\section{Total Positivity Conjecture}
\label{sec:n3}
\begin{definition}An $n\times n$ matrix $M$ with entries in $\bR$ is called \emph{totally positive} if all minors of $M$ are positive.
\end{definition}
\begin{definition} (Fomin and Zelevinsky \protect\cite{TP}) For each chamber label $(r,b)$ of $w$ we define the minor $\Delta_{r,b}$ to be the determinant of the matrix with rows of $M$  corresponding to $r$ and columns of $M$ corresponding to $b$. We call the collection of all such minors the \emph{chamber minors of $w$}.
\end{definition}
Fomin and Zelevinsky proved that for any commutation class of double wiring diagrams $w$, a matrix $M$ is totally positive if and only if all of its chamber minors are positive \protect\cite{TP}.
\begin{conjecture} (Fomin and Zelevinsky \protect\cite{TP}) Every minor can be written as a Laurent polynomial with positive coefficients in terms of the chamber minors of $w$.
\end{conjecture}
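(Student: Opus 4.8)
The plan is to turn the conjecture, for each fixed $n\le 4$, into a finite computation driven by the explicit graph $\Phi_n$ produced from Theorems \ref{thm:3} and \ref{thm:4}. Two inputs are used as black boxes. First, following Fomin and Zelevinsky \cite{TP}, every braid move carries an \emph{exchange relation}: if the move has center $(r,b)$ and replaces the chamber minor $\Delta_{r,b}$ by $\Delta'_{r,b}$, then $\Delta_{r,b}\,\Delta'_{r,b}=\Delta_a\Delta_c+\Delta_d\Delta_e$ (and similarly for $2$-moves), where $\Delta_a,\Delta_c,\Delta_d,\Delta_e$ are chamber minors indexing vertices adjacent to $(r,b)$ in $Q(w)$ and are common to the two commutation classes. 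Second, by the Laurent phenomenon \cite{laurent} every minor of a matrix $M$ is already \emph{some} Laurent polynomial in the chamber minors of any commutation class $w$, so only positivity of the coefficients is in question; and every minor $\Delta_{I,J}$ with $|I|=|J|$ occurs as a chamber minor of at least one commutation class, so it suffices to certify positive Laurent expressions for all chamber minors of all classes in terms of the chamber minors of a fixed one.

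I would then set a base vertex $w_0$, the standard double wiring diagram with all red crossings to the left of all blue crossings: its associated network is genuinely planar, Lindström's lemma writes each minor of $M$ as a sum over families of non-intersecting paths, and each edge weight of the network is a Laurent monomial in the chamber minors of $w_0$, so positivity holds at $w_0$ in all $n$. From here I would propagate outward over $\Phi_n$. A single traversal of the graph, using Theorems \ref{thm:3} and \ref{thm:4}, records every vertex, the center of each braid move, and hence each exchange relation; running a breadth-first search from $w_0$, I maintain for every chamber minor of every visited class a Laurent expression in the chamber minors of $w_0$. On crossing an edge from a processed class $w$ to a new class $w'$, the only new chamber minor $\Delta'_{r,b}$ is obtained by substituting the stored expressions for $\Delta_a,\Delta_c,\Delta_d,\Delta_e,\Delta_{r,b}$ into the exchange relation and cancelling. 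For $n\le 4$ this terminates and exhibits every minor as an explicit Laurent polynomial in the chamber minors of every commutation class, and one then inspects the coefficients — which is exactly the content of the conjecture.

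The hard part is that the substitution step is not automatically positivity-preserving: it divides by the positive Laurent polynomial representing $\Delta_{r,b}$, and a ratio of positive Laurent polynomials can have negative coefficients (already $(x^3+1)/(x+1)=x^2-x+1$). So the argument is a \emph{verification} rather than an a priori proof: one must confirm, at each of the finitely many edges of $\Phi_n$ and for each minor, that the cancellation forced by the Laurent phenomenon actually leaves only positive coefficients. For $n\le 4$ the graphs are small enough (Table \ref{data}) to carry this out in full, and the explosion in the $n=5$ row of Table \ref{data} is precisely why the confirmation halts at $n=4$.
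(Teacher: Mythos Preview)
Your plan is close to the paper's, but there is a real gap in the reduction step. You assert that ``it suffices to certify positive Laurent expressions for all chamber minors of all classes in terms of the chamber minors of a fixed one,'' and then run a single BFS outward from the standard class $w_0$. That only establishes the conjecture for $w=w_0$: the statement demands that every minor be a positive Laurent polynomial in the chamber minors of \emph{each} commutation class $w$. Knowing that a minor $\Delta$ is positive Laurent in the chamber minors of $w_0$, and that the chamber minors of some other $w$ are positive Laurent in those of $w_0$, gives you no way to invert and conclude that $\Delta$ is positive Laurent in the chamber minors of $w$; composition of positive Laurent substitutions goes the wrong direction for that. Your Lindstr\"om argument is correct for the particular planar-network class $w_0$ but does not transfer to an arbitrary $w$, so it cannot serve as the universal base case.

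The repair is to take \emph{every} vertex of $\Phi_n$ in turn as the base. Once you do that, the Lindstr\"om step becomes superfluous (each minor eventually shows up as a chamber minor somewhere along the search and so acquires an expression automatically), and your procedure collapses to the paper's: for each vertex $v$ and each minor $\Delta$, follow a path in $\Phi_n$ from $v$ to some class where $\Delta$ is a chamber minor, iterate the exchange relation $Y=(AD+BC)/Z$ back along the path, invoke the Laurent phenomenon to guarantee the quotients are honest Laurent polynomials, and then inspect the output for sign. Your closing paragraph correctly isolates the one non-formal step---that the division could in principle introduce minus signs---and correctly frames the argument as a finite verification; the paper's proof makes exactly the same move, listing the positivity check as a separate final step.
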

\begin{theorem} The Fomin Zelevinsky conjecture is true for $n\le 4$.
\end{theorem}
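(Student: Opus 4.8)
The plan is to combine the explicit list of $\Phi_4$ with the subtraction-free three-term relations that govern how chamber minors change under braid moves. For $n\le 3$ the conjecture is already known: the cases $n\le 2$ are immediate, and $n=3$ was settled by Fomin and Zelevinsky \cite{TP}. So the real content is the case $n=4$, where there are only $\sum_{k=1}^{4}\binom{4}{k}^{2}=69$ minors to account for and only finitely many commutation classes, all of which we have enumerated.

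The mechanism we exploit is that a braid move changes exactly one chamber label, and the corresponding change of chamber minor is subtraction-free. Suppose $w$ and $w'$ are adjacent in $\Phi_n$, the move replacing the chamber minor $p$ of $w$ by the chamber minor $q$ of $w'$ while all other chamber minors are common to the two classes. A $3$-move produces a three-term relation $pq=\Delta_1\Delta_2+\Delta_3\Delta_4$ and a $2$-move a two-term relation $pq=\Delta_1\Delta_2$, where the $\Delta_i$ lie in the shared set of chamber minors; in either case $q=p^{-1}(\Delta_1\Delta_2+\Delta_3\Delta_4)$ or $q=p^{-1}\Delta_1\Delta_2$ is a positive Laurent polynomial in the chamber minors of $w$. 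Theorems \ref{thm:3} and \ref{thm:4} are exactly what allow us to recognize these moves and read off the relevant local configuration, hence the correct relation, from the quiver. This gives a propagation principle: if a minor $\Delta$ is a positive Laurent polynomial in the chamber minors of $w'$ in which $q$ occurs only to nonnegative powers, then the substitution above exhibits $\Delta$ as a positive Laurent polynomial in the chamber minors of $w$, because Laurent polynomials with nonnegative coefficients are closed under sums, products, and nonnegative powers.

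With this in hand the case $n=4$ reduces to a finite search. For each minor $\Delta_{I,J}$ and each class $w\in\Phi_4$: if $(I,J)$ is a chamber label of $w$ we are done at once; otherwise we use our list of $\Phi_4$, together with Theorems \ref{thm:3} and \ref{thm:4} to generate the neighbours of a class and the associated exchange of chamber minors, and we search for a path $w=w_0,w_1,\dots,w_r=w^{\ast}$ such that $(I,J)$ is a chamber label of $w^{\ast}$ and such that, walking the path back from $w^{\ast}$ to $w$, at every step the chamber minor that is removed occurs only to nonnegative powers in the current expression. Along such a path the propagation principle applies repeatedly, starting from the trivial expression ``$\Delta_{I,J}$ equals one of the chamber minors of $w^{\ast}$,'' and produces the desired positive Laurent expansion over the chamber minors of $w$. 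The work is cut down using the natural symmetries of $\Phi_n$ (transposition and reversal of the matrix, and the colour swap of the two wiring diagrams), which act compatibly on classes, on minors, and on chamber minors. Carrying out this computation confirms the conjecture for every class of $\Phi_4$.

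The main obstacle is the existence of such a ``good'' path for every pair $(w,\Delta_{I,J})$: a priori there is no guarantee that one can reach a class having $\Delta_{I,J}$ as a chamber minor without, at some step, being forced to invert a chamber minor that is about to be removed. This is why the argument is run as an exhaustive search over the already-computed graph $\Phi_4$ rather than a uniform derivation, and it is also what obstructs a painless extension to $n=5$. A subsidiary difficulty is sign bookkeeping: each $3$-move must be matched with the correct Plücker-type identity among $4\times 4$ minors so that the resulting relation is genuinely subtraction-free. Should the good-path search fail for some pair, the fallback is to compute the Laurent expansion of $\Delta_{I,J}$ in the chamber minors of $w$ directly --- the chamber minors of $w$ form a transcendence basis, and the three-term relations allow one to eliminate minors one at a time --- and then to inspect the coefficients; this is heavier but is guaranteed to terminate.
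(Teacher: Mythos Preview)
Your proposal takes a genuinely different route from the paper, and the difference matters.

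The paper's argument is simpler and more robust. For each pair $(v,\Delta)$ it picks \emph{any} path in $\Phi_n$ from $v$ to a vertex $v'$ having $\Delta$ as a chamber minor, and walks \emph{forward} from $v$ to $v'$, at each edge computing the new chamber minor via $Y=(AD+BC)/Z$ and expressing it as a rational function in the chamber minors of $v$. The key point, which you do not invoke, is that the Laurent phenomenon \cite{ltheorem} guarantees the answer is always a Laurent polynomial, even though at each step one divides by $Z$, which is itself a non-monomial Laurent polynomial in the chamber minors of $v$. Positivity is \emph{not} tracked along the way; it is simply inspected in the final expression. No ``good path'' condition is needed.

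Your primary strategy instead walks backward from $w^{\ast}$ to $w$ and tries to preserve positivity at every step by substituting $q=p^{-1}(\Delta_1\Delta_2+\Delta_3\Delta_4)$ only when $q$ occurs to nonnegative powers. This is an attractive idea because, if it succeeds, it bypasses the Laurent phenomenon entirely and gives a more elementary verification. The difficulty is exactly the one you flag: nothing guarantees such good paths exist for every pair in $\Phi_4$, and you do not report that the search actually succeeds---you hedge with ``should the good-path search fail.'' As written, the argument is therefore incomplete. Moreover, your fallback (``compute the Laurent expansion \ldots\ directly \ldots\ and then inspect the coefficients'') is precisely the paper's method, but without citing the Laurent phenomenon it is not clear why the elimination produces a Laurent polynomial rather than an arbitrary rational function, so the phrase ``inspect the coefficients'' has no meaning yet.

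In short: the paper trades your delicate path condition for one appeal to the Laurent phenomenon and a brute positivity check at the end. Your approach would be a nice strengthening if you can certify that the good-path search terminates successfully for all $303{,}420$ pairs in $\Phi_4$; otherwise you should fall back to the paper's method and explicitly invoke the Laurent phenomenon. (Two small side remarks: the paper does not claim $n=3$ was already settled in \cite{TP}---it verifies $n\le 4$ uniformly; and both $2$-moves and $3$-moves obey the single three-term relation $AD+BC=XY$, so your separate monomial form for $2$-moves is unnecessary.)
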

\begin{proof}
In \protect\cite{TP} Fomin and Zelevinsky show that if $w$ and $w'$ are linked by a braid move as pictured in Figure \ref{moves}, then their chamber minors satisfy the equation \[AD+BC=XY.\]
Using the program Fermat \protect\cite{fermat} and a C++ program written by the first author, we  verify this conjecture using the following algorithm:
\begin{enumerate}
\item For each vertex $v \in \Phi_n$ and minor $\Delta$ find a path from $v$ to a vertex $v'$ such that $\Delta$ is a chamber minor of $v'$.  This is possible since $\Phi_n$ is connected and
every minor appears as the chamber minor for some double wiring diagram.
\item At each edge of this path use Fermat to compute the new minor as a laurent polynomial in terms of the previous minors using the formula
$Y=(AD+BC)/Z$.  The Laurent Theorem \protect\cite{ltheorem} guarantees the result will be a laurent polynomial in the chamber minors of $v$.  Repeat the process until $\Delta$ is written as a laurent polynomial in terms of the chamber minors of $v$.
\item Verify that the corresponding laurent polynomial has all positive coefficients.  
\end{enumerate}
\end{proof}
\subsection{Example}
 We demonstrate that  $\Delta_{14,12}$ can be written as a Laurent polynomial in the chamber minors of the wiring diagram in Figure \ref{4wire} with positive coefficients.
\begin{enumerate}
\item The diagrams in Figure \protect\ref{path} determine a path in $\Phi_4$ from the wiring diagram to a vertex that contains $\Delta_{14,12}$ as a chamber minor. 
\begin{figure}[H]
\includegraphics[width=\textwidth]{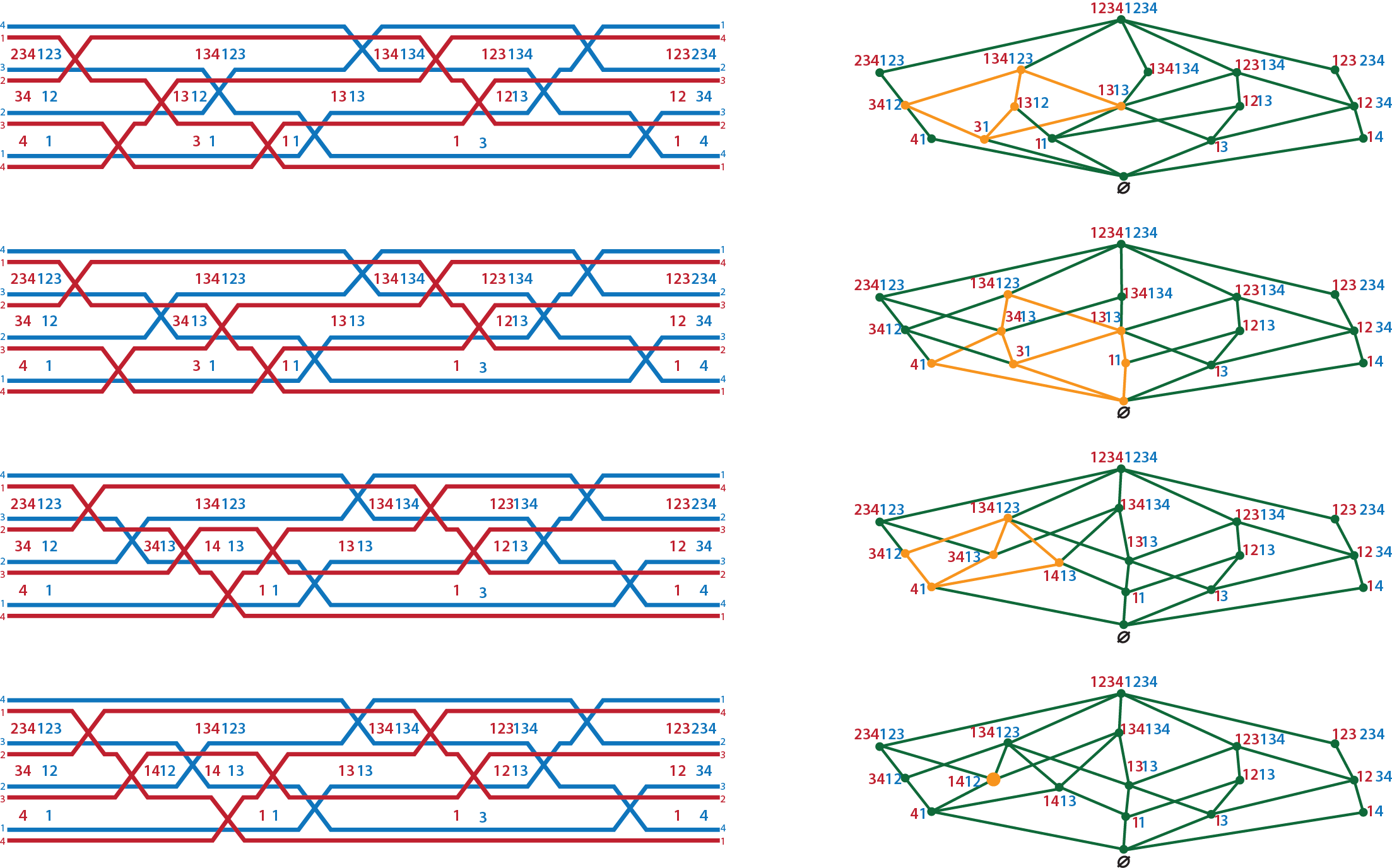}
\caption{Path to $\Delta_{14,12}$}
\label{path}
\end{figure}
\item At each step we use Fermat to compute the new laurent polynomial.
\begin{align*}
\Delta_{34,13} &= \frac{\Delta_{34,12}\Delta_{13,13} + \Delta_{134,123}\Delta_{3,1}}{\Delta_{13,12}} \\
 &= \Delta_{134,123}\Delta^{-1}_{13,12}\Delta_{3,1} + \Delta_{34,12}\Delta^{-1}_{13,12}\Delta_{13,13}\\
\\
\Delta_{14,13}& = \frac{\Delta_{1,1}\Delta_{34,13} + \Delta_{4,1}\Delta_{13,13}}{\Delta_{3,1}} \\
&=\Delta_{134,123}\Delta^{-1}_{13,12} + \Delta_{34,12}\Delta^{-1}_{13,12}\Delta_{13,13}\Delta^{-1}_{3,1}\Delta_{1,1} + \Delta_{13,13}\Delta_{4,1}\Delta^{-1}_{3,1} \\
\\
\Delta_{14,12}& = \frac{\Delta_{14,13}\Delta_{34,12} + \Delta_{134,123}\Delta_{4,1}}{\Delta_{34,13}}\\
&= \Delta_{34,12}\Delta^{-1}_{3,1}\Delta_{1,1} + \Delta_{13,12}\Delta_{4,1}\Delta^{-1}_{3,1} 
\end{align*}
\item Observe that the coefficients in the expression of $\Delta_{14,12}$, in terms of the chamber minors, are all positive.
\end{enumerate}

\begin{remark}
There are $34\times 14=476$ pairs of vertices and chamber minors in $\Phi_3$, and $62\times 4,894 =303,420$ such combinations in $\Phi_4$.
We are unable to confirm the conjecture for $n=5$ since it would involve $242 \times 5,520,372 =1,335,930,024$ computations involving extremely large laurent polynomials. When $n=4$ the laurent polynomials in question frequently  have over one hundred terms.
\end{remark}
\newpage
\bibliography{double}{}

\begin{thebibliography}{1}

\bibitem{single2}
Robert B{\'e}dard.
\newblock On commutation classes of reduced words in {W}eyl groups.
\newblock {\em European J. Combin.}, 20(6):483--505, 1999.

\bibitem{single1}
Roger Carter and Robert Marsh.
\newblock Regions of linearity, {L}usztig cones, and canonical basis elements
  for the quantized enveloping algebra of type {$A_4$}.
\newblock {\em J. Algebra}, 234(2):545--603, 2000.
\newblock Special issue in honor of Helmut Wielandt.

\bibitem{cluster1}
Sergey Fomin and Andrei Zelevinsky.
\newblock Cluster algebras. i. foundations.
\newblock {\em J. Amer. Math. Soc.}, 15(2):497--529 (electronic).

\bibitem{laurent}
Sergey Fomin and Andrei Zelevinsky.
\newblock The laurent phenomenon.
\newblock {\em Adv. in Appl. Math.}, 28(2):119--144.

\bibitem{TP}
Sergey Fomin and Andrei Zelevinsky.
\newblock Total positivity: tests and parametrizations.
\newblock {\em Math. Intelligencer}, 22(1):23--33.

\bibitem{ltheorem}
Sergey Fomin and Andrei Zelevinsky.
\newblock Double {B}ruhat cells and total positivity.
\newblock {\em J. Amer. Math. Soc.}, 12(2):335--380, 1999.

\bibitem{fermat}
Robert Lewis.
\newblock Fermat computer algebra system, 2007.

\end{thebibliography}
\bibliographystyle{plain}

\end{document}